\documentclass[a4paper]{article}%
\usepackage{amsmath}
\usepackage{amsfonts}
\usepackage{amssymb}
\usepackage{graphicx}%
\providecommand{\U}[1]{\protect\rule{.1in}{.1in}}
\newtheorem{theorem}{Theorem}

\newtheorem{corollary}[theorem]{Corollary}

\newtheorem{lemma}[theorem]{Lemma}

\newenvironment{proof}[1][Proof]{\noindent\textbf{#1.} }{\ \rule{0.5em}{0.5em}}

\begin{document}
	
\title{Asymptotics of Titchmarsh--Weyl functions near the real axis and
	an application to the KdV hierarchy}
\author{Shuo Zhang$^{1}$}
\date{}
\maketitle

\begin{abstract}
	We establish high-energy asymptotic expansions of Titchmarsh--Weyl
	functions for
	one-dimensional Schr\"odinger and Dirac operators in regions whose
	boundaries approach the spectrum at a prescribed polynomial
	rate.  For bounded potentials with bounded derivatives, the expansions
	remain uniform up to these boundaries, with an explicit loss in the
	remainder determined by the rate of approach.  We treat both self-adjoint
	Dirac operators and non-self-adjoint Dirac operators with skew-adjoint
	potential matrices, keeping track of the distinct half-planes in which the
	two scalar Weyl coordinates are naturally defined.  As an application of
	the Schr\"odinger expansion, we verify the high-energy hypothesis in
	Kotani's construction of KdV flows: for every odd integer $p\geq3$, each
	real-valued $q\in W^{2p-1,\infty}(\mathbb{R})$ generates a global classical
	solution of the member of the KdV hierarchy indexed by $(p+1)/2$.
\end{abstract}

\section{Introduction}
\footnotetext[1]{School of Math. Nanjing Univ. Nanjing China shuozhang@smail.nju.edu.cn}

A principal motivation for studying Titchmarsh--Weyl asymptotics close to
the spectrum comes from the construction of KdV flows in the
Sato--Segal--Wilson framework.  In Kotani's formulation \cite{Kotani}, the
spectral data are assembled from half-line Titchmarsh--Weyl functions on an
exterior domain whose boundary approaches the spectrum at high energy.
Consequently, the usual asymptotics in fixed nonreal sectors do not by
themselves verify the hypotheses needed for the flow construction.  This is
the reason for seeking uniform expansions in the narrowing domains studied
below.

We briefly recall the scalar Titchmarsh--Weyl function.  Let
\[
H_q=-\frac{d^2}{dx^2}+q(x)
\]
on $\mathbb R_+=[0,\infty)$, and suppose that the expression is limit point
at $+\infty$.  For $z\in\mathbb C\setminus\mathbb R$, let
$\psi_+(\cdot,z)$ be, up to a nonzero constant factor, the solution of
$H_q\psi=z\psi$ that is square integrable near $+\infty$.  The corresponding
Titchmarsh--Weyl function is
\begin{equation}
	m_+^{\rm S}(x,z)=\frac{\psi_+'(x,z)}{\psi_+(x,z)}.
	\label{1.1}
\end{equation}
It encodes the half-line spectral data and is independent of the
normalization of $\psi_+$.  By analytic continuation, we use the same
notation at sufficiently negative real values of $z$, below the spectrum of
a fixed self-adjoint half-line realization; these are the only real spectral
parameters that occur in the high-energy estimates below.

The study of high-energy Weyl--Titchmarsh asymptotics goes back to Weyl's
introduction of the $m$-function in his spectral theory of singular
differential equations \cite{Weyl}.  For scalar half-line Schr\"odinger
operators, Marchenko \cite{Marchenko1952} proved the fundamental leading-order relation
\[
m^{\rm S}(0,z)=-\sqrt{-z}\,(1+o(1)),
\qquad |z|\to\infty,
\]
uniformly in closed nonreal sectors, and Levitan  \cite{Levitan1952} subsequently gave a shorter
proof.  Here the square root is chosen so
that $\operatorname{Re}\sqrt{-z}>0$.  Everitt and Everitt--Halvorsen
\cite{Everitt1972,EverittHalvorsen1978}
developed related asymptotic criteria for the Titchmarsh--Weyl coefficient.  Atkinson \cite{Atkinson1981} then obtained a more local
refinement: for a locally integrable potential, the first correction to the
free term can be expressed through a Laplace-type transform of $q$ near the
finite endpoint.  In particular, at a Lebesgue point of
$q$ this yields
\[
m^{\rm S}(0,z)
=-\sqrt{-z}-\frac{q(0)}{2\sqrt{-z}}+o(|z|^{-1/2}).
\]
Bennewitz \cite{Bennewitz1989} placed such leading spectral asymptotics in a more general
Sturm--Liouville setting and clarified how little coefficient regularity is
needed for them.

For smoother potentials, the emphasis shifted from the leading term to
finite or complete inverse-power expansions.  Harris \cite{HarrisSchrodinger} showed that if
$q\in C^N[0,\delta)$, then
\[
m^{\rm S}(0,z)
\sim i\sqrt z+
\sum_{j\geq1}a_j(0)(i\sqrt z)^{-j},
\]
truncated at the order permitted by $N$, where the coefficients are local
differential polynomials in $q$.  These
results make the locality of the coefficients transparent, but their
uniform remainder estimates are formulated in fixed closed sectors
\[
S_a=\{z\in\mathbb C_+:a\leq\arg z\leq\pi-a\},
\qquad 0<a<\frac{\pi}{2}.
\]
Thus $|\operatorname{Im}z|\geq(\sin a)|z|$, and they do not directly cover
high-energy sequences whose angle with the real axis tends to zero.

A particularly important change of viewpoint was introduced by Barry
Simon.  Instead of recording only a finite list of inverse-power
coefficients, Simon \cite{Simon1999} represented the high-energy correction to the free
Weyl function by an $A$-amplitude: for $\kappa=\sqrt{-z}$ in a right
half-plane and each fixed $a>0$,
\[
m^{\rm S}(0,-\kappa^2)
=-\kappa-\int_0^a A(\alpha)e^{-2\alpha\kappa}\,d\alpha
+\text{an exponentially small remainder}.
\]
This formulation encodes the potential near the endpoint in a Laplace
transform and makes the local character of high-energy data quantitative.  Fritz Gesztesy and Simon
\cite{GesztesySimon2000} extended this formalism from
the initial class of potentials to general real locally integrable
potentials, including limit-circle situations, and established a direct
connection between the $A$-amplitude and the spectral measure.  Their local Borg--Marchenko theorem then showed
that exponentially small high-energy differences of two Weyl functions
determine equality of the corresponding potentials on an initial interval
\cite{GesztesySimonBorg2000}.  In this sense, the Simon--Gesztesy theory
goes beyond a formal asymptotic series: it identifies precisely how local
coefficient information is stored in exponentially accurate spectral
asymptotics.

Gesztesy's subsequent work with Clark 
\cite{ClarkGesztesySchrodinger} carried both aspects of this program
to matrix-valued systems.  For self-adjoint matrix Schr\"odinger operators,
Clark and Gesztesy established sectorial high-energy expansions of the
matrix Weyl--Titchmarsh function, supplied recursive formulas for the
coefficients, and proved associated local uniqueness results.  More recently, the leading asymptotic
theory has also been extended to Schr\"odinger operators with
measure-valued potentials, demonstrating that the first-order Weyl
asymptotics persist far below the smooth setting required for a long
inverse-power expansion \cite{LugerTeschlWoehrer}.

We shall also consider the two Dirac operators
\[
D_q^{\rm sa}
=i\begin{pmatrix}1&0\\0&-1\end{pmatrix}\frac{d}{dx}
+\begin{pmatrix}0&q\\\overline q&0\end{pmatrix}
\]
and
\[
D_q^{\rm ns}
=i\begin{pmatrix}1&0\\0&-1\end{pmatrix}\frac{d}{dx}
+\begin{pmatrix}0&q\\-\overline q&0\end{pmatrix}.
\]
The first operator is self-adjoint under the usual hypotheses.  The second
is non-self-adjoint; although its potential matrix is skew-adjoint, the full
differential operator is in general neither self-adjoint nor
skew-self-adjoint.

The Dirac theory developed in parallel.  Everitt, Hinton, and Shaw  \cite{EverittHintonShaw1983} obtained the leading asymptotic form of the Titchmarsh--Weyl coefficient for Dirac
systems.  Under finite smoothness assumptions
near a regular endpoint, Harris \cite{HarrisDirac} derived a finite high-energy expansion whose
coefficients are determined by endpoint values of the potential and its
derivatives, uniformly in every fixed closed subsector of the upper
half-plane.  Clark and Gesztesy \cite{ClarkGesztesyDirac} later treated general
matrix-valued Dirac-type operators, obtaining high-energy Weyl--Titchmarsh
matrix asymptotics together with local uniqueness, trace, and Borg-type
results.

For both Schr\"odinger and self-adjoint Dirac operators on the half-line with integrable coefficients, Hinton, Klaus, and Shaw \cite{HintonKlausShaw1989} obtained convergent series representations of the associated Titchmarsh--Weyl $m$-functions and, assuming integrability of the derivatives up to order $N$, derived high-energy expansions through order $\lambda^{-N}$ valid in the closed upper half-plane, including along the real axis.

For non-self-adjoint Dirac systems, a
Weyl theory in the appropriate exterior half-planes, including existence,
uniqueness, and inverse reconstruction, was developed for skew-self-adjoint
systems by Fritzsche, Kirstein, Roitberg, and Sakhnovich \cite{FKRS}.

The common geometric limitation in the classical high-energy expansions is
their sectorial character: the spectral parameter must remain separated
from the real axis by a fixed positive angle (or, in the non-self-adjoint
setting, stay in a fixed exterior half-plane).

The purpose of the present paper is to retain these expansions in domains
whose boundaries approach the real axis.  Let $N\geq1$,
$0\leq n\leq N-1$, and choose a positive even function $\gamma_n$ such that,
for some constants $c_n,C_n>0$,
\begin{equation}
	c_n(1+|t|)^{-n}\leq\gamma_n(t)
	\leq C_n(1+|t|)^{-n},
	\qquad t\in\mathbb R.
	\label{1.2}
\end{equation}
For the Schr\"odinger problem on $\mathbb R_+$, define
\begin{equation}
	\Omega_{n,+}^{\rm S}
	=\{k=\lambda+i\eta\in\mathbb{C}:\lambda>\gamma_n(\eta)\}.
	\label{1.3}
\end{equation}
When $z\in\mathbb{C}\setminus[0,\infty)$, $\sqrt{-z}$ denotes the branch
with positive real part.  Thus the natural parameter for
$m_+^{\rm S}$ is $k=\sqrt{-z}\in\Omega_{n,+}^{\rm S}$.

Our first theorem extends Harris's fixed-sector expansion to a domain whose
boundary approaches the imaginary axis in the $k$-plane at a prescribed
polynomial rate.

\begin{theorem}
	\label{ t1}
	Let $q\in W^{N,\infty}(\mathbb{R}_+;\mathbb{R})$ and
	$0\leq n\leq N-1$.  There exist differential polynomials $c_j$ in $q$
	such that, as $|k|\to\infty$,
	\begin{equation*}
		m_+^{\rm S}(0,-k^2)
		=-k+\sum_{j=1}^{N-n-1}c_j(0)k^{-j}+O(|k|^{n-N}),
		\qquad k\in\Omega_{n,+}^{\rm S}.
	\end{equation*}
	The coefficients are determined by
	\[
	c_1=-\frac{q}{2}, \qquad c_{j+1}=\frac12\left(c_j'+\sum_{\ell=1}^{j-1}c_\ell c_{j-\ell}\right), \qquad j\geq1.
	\]
\end{theorem}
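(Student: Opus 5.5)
The plan is to work with the Riccati equation satisfied by $m(x)=m_+^{\rm S}(x,-k^2)$ and to compare $m$ with an explicit approximate solution. Since $\psi_+''=(q+k^2)\psi_+$, the function $m=\psi_+'/\psi_+$ satisfies
\[
m'=q+k^2-m^2 .
\]
Substituting $m=-k+\sum_{j\ge1}c_j(x)k^{-j}$ and matching powers of $k$ reproduces the stated recursion. At order $k^0$ one gets $0=q+2c_1$. At order $k^{-j}$ one gets $c_j'=2c_{j+1}-\sum_{\ell=1}^{j-1}c_\ell c_{j-\ell}$.

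\textbf{Approximate solution.} Note that $c_j$ involves derivatives of $q$ up to order $j-1$. Hence for $q\in W^{N,\infty}$ the functions $c_1,\dots,c_N$ and $c_N'$ are bounded on $\mathbb R_+$. I would set
\[
M(x,k)=-k+\sum_{j=1}^{N}c_j(x)k^{-j},
\]
deliberately taking one more term than appears in the statement. Its residual is
\[
R:=M'+M^2-q-k^2=c_N'k^{-N}+(\text{products }c_\ell c_{j}\text{ with total index}>N),
\]
so $\|R\|_\infty=O(|k|^{-N})$. Put $\psi_M(x)=\exp\int_0^x M$ and $w=m-M$. Then $w$ satisfies the linear equation $w'=-(m+M)w-R$. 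Using the decay of $\psi_+\psi_M$ at infinity (a limit along a sequence, justified by the $L^2$ bounds below), this gives the representation
\[
w(0)=\int_0^\infty \frac{\psi_+(t)\psi_M(t)}{\psi_+(0)\psi_M(0)}\,R(t)\,dt .
\]
The terms $c_jk^{-j}$ with $N-n\le j\le N$ are $O(|k|^{n-N})$. It therefore suffices to prove $|w(0)|=O(|k|^{n-N})$ on $\Omega^{\rm S}_{n,+}$.

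\textbf{Main obstacle: decay of $\psi_M$ near the boundary.} Write $k=\lambda+i\eta$, and note that on $\Omega_{n,+}^{\rm S}$ we have $\lambda\gtrsim|k|^{-n}$, which may be much smaller than $|k|^{-1}$. So a naive bound such as $\operatorname{Re}M=-\lambda+O(|k|^{-1})$ is useless. I would control $\operatorname{Re}\int_0^t M$ term by term. For odd $j$, $\operatorname{Re}k^{-j}=O(\lambda|k|^{-j-1})$, which is harmless against $-\lambda t$. For even $j$, $\operatorname{Re}k^{-j}$ is not small relative to $\lambda$. Here I would use the classical fact for the Riccati hierarchy that the even coefficients $c_{2j}$ are exact derivatives of differential polynomials in $q$; for instance $c_2=c_1'/2$. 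This would be proved by induction from the recursion, or via the identity relating $m$ to the diagonal Green's function, whose expansion contains only odd powers of $k$. Consequently $\int_0^t c_{2j}$ is bounded uniformly in $t$, and
\[
|\psi_M(t)/\psi_M(0)|\le Ce^{-\lambda t(1-o(1))},\qquad \int_0^\infty|\psi_M/\psi_M(0)|^2\lesssim\lambda^{-1}.
\]

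\textbf{Closing the estimate.} For $\psi_+$, Green's identity gives $\operatorname{Im}m(0)=-\operatorname{Im}(-k^2)\int_0^\infty|\psi_+/\psi_+(0)|^2$, and $|\operatorname{Im}k^2|=2\lambda|\eta|\approx\lambda|k|$. Cauchy--Schwarz in the representation of $w(0)$ then yields
\[
|w(0)|\lesssim|k|^{-N}\Big(\frac{|m(0)|}{\lambda|k|}\Big)^{1/2}\lambda^{-1/2}.
\]
Inserting $|m(0)|\le|M(0)|+|w(0)|\le C|k|+|w(0)|$ gives a quadratic inequality for $|m(0)|^{1/2}$. This first yields the a priori bound $|m(0)|\lesssim|k|$, using $\lambda^{-1}|k|^{-N-1/2}\ll|k|^{1/2}$, which holds because $n\le N-1$. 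Feeding that bound back in gives $|w(0)|\lesssim|k|^{-N}\lambda^{-1}\lesssim|k|^{n-N}$, as required. The same argument works at any base point $x$ with constants uniform in $x$, though only $x=0$ is needed.
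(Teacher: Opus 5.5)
Your argument is correct in substance, and it takes a genuinely different route from the paper.

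\textbf{Comparison with the paper.} The paper passes to a $2\times2$ system and diagonalizes it to order $N+1$ with the symmetry-preserving gauge of Lemma~\ref{l5}. It then constructs the decaying solution by a Banach contraction for the small off-diagonal quotient $\omega=u_2/u_1$, and reads off $m_+^{\rm S}-p_N=k\omega/(a(a+b\omega))$. The danger near $\partial\Omega_{n,+}^{\rm S}$ is handled by the $\sigma_3$-symmetry: it makes the diagonal entries of $D_{N+1}$ purely imaginary for real $\zeta=ik$, hence $\operatorname{Re}d_j=O(\lambda|k|^{-2})$.

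You instead stay with the scalar Riccati equation. You compare the true Weyl solution with $\psi_M$ through the exact identity $\bigl(\psi_M(\psi_+'-M\psi_+)\bigr)'=-\psi_+\psi_MR$, and close with Cauchy--Schwarz and a bootstrap on $|m(0)|$. Your total-derivative property of the even coefficients is the scalar counterpart of the paper's symmetry statement, and it is quickly justified. For the formal series $\mu(k)=-k+\sum_jc_jk^{-j}$ one has $(\mu(-k)-\mu(k))'=-(\mu(-k)-\mu(k))(\mu(-k)+\mu(k))$. Hence $\sum_{j\,\mathrm{even}}c_jk^{-j}=-\tfrac12\bigl(\log(1-\sum_{j\,\mathrm{odd}}c_jk^{-j-1})\bigr)'$, and the primitives are bounded.

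What each route buys: yours is more elementary for this theorem. It needs no diagonalization lemma, no separate construction of an $L^2$ solution, and no identification of that solution with $\psi_+$; the a priori control of $\|\psi_+\|_2$ comes from Green's identity. In exchange, the paper's scheme never uses Green's identity for $\psi_+$. That is why the same machinery also covers the self-adjoint and non-self-adjoint Dirac operators and yields analyticity in the spectral parameter.

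\textbf{A step that needs repair.} You justify $\int_0^\infty|\psi_+/\psi_+(0)|^2\lesssim|m(0)|/(\lambda|k|)$ by $|\operatorname{Im}k^2|=2\lambda|\eta|\approx\lambda|k|$. This fails in the part of $\Omega_{n,+}^{\rm S}$ where $|\eta|\ll\lambda$. On the real $k$-axis, $z=-k^2<0$ and the imaginary-part identity is vacuous.

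The inequality itself is true. If $|\eta|\le\lambda/2$, integrate the real part instead:
\[
\bigl(\operatorname{Re}(\overline{\psi_+}\psi_+')\bigr)'=|\psi_+'|^2+(q-\operatorname{Re}z)|\psi_+|^2,
\qquad -\operatorname{Re}z=\lambda^2-\eta^2\ge\tfrac35|k|^2 .
\]
This gives $\int_0^\infty|\psi_+/\psi_+(0)|^2\lesssim|m(0)|/|k|^2\le|m(0)|/(\lambda|k|)$. If $|\eta|>\lambda/2$, then $|\eta|>|k|/\sqrt5$ and your argument applies as written.

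\textbf{Minor slips.} None of these affects the conclusion.
\begin{itemize}
\item The correct identity is $\operatorname{Im}m(0)=+\operatorname{Im}z\int_0^\infty|\psi_+/\psi_+(0)|^2$.
\item The residual $R$ also contains products of total index exactly $N$.
\item The boundary term at infinity is $\psi_M(\psi_+'-M\psi_+)$. Its vanishing along a sequence uses $\psi_+'\in L^2$, which holds for bounded $q$.
\end{itemize}

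With the repair above, your argument yields $|w(0)|\lesssim|k|^{-N}\lambda^{-1}\lesssim|k|^{n-N}$, as claimed.
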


For a potential $q$ on the whole line, let $\psi_-(\cdot,z)$ be the solution
square integrable near $-\infty$ and use the standard Herglotz convention
\[
m_-^{\rm S}(0,z;q)=-\frac{\psi_-'(0,z)}{\psi_-(0,z)}.
\]
Put $\check q(x)=q(-x)$.  Reflection in the origin then gives the exact
identity
\begin{equation}
	m_-^{\rm S}(0,z;q)=m_+^{\rm S}(0,z;\check q).
	\label{1.4}
\end{equation}
Thus the left half-line statement need not be included separately in the
theorem.  Introduce, only for the two-sided construction,
\[
\Omega_{n,-}^{\rm S}
=\{k=\lambda+i\eta\in\mathbb C:-\lambda>\gamma_n(\eta)\}.
\]
Since $\gamma_n$ is even, $k\in\Omega_{n,-}^{\rm S}$ implies
$-k\in\Omega_{n,+}^{\rm S}$.  Applying
Theorem~\ref{ t1} to $\check q$ with parameter $-k$
gives the expansion of $m_-^{\rm S}$ for
$k\in\Omega_{n,-}^{\rm S}$.  In the conventions used here,
\begin{equation}
	m_-^{\rm S}(0,-k^2)
	=k-\sum_{j=1}^{N-n-1}c_j(0)k^{-j}+O(|k|^{n-N}),
	\qquad k\in\Omega_{n,-}^{\rm S},
	\label{1.5}
\end{equation}
where the recursion above and
$c_j[\check q](0)=(-1)^{j-1}c_j[q](0)$ have been used.

Theorem~\ref{ t1} also supplies a spectral hypothesis in
Kotani's construction of KdV flows \cite{Kotani}.  Let $p\geq3$ be odd.
Choose a smooth positive even function $\omega$ such that, for large
$|\eta|$,
\[
\omega(\eta)\geq\gamma_{p-1}(\eta),
\qquad
\omega(\eta)\asymp(1+|\eta|)^{-(p-1)},
\]
and put
\[
D_- =\{k=\lambda+i\eta:|\lambda|>\omega(\eta)\}.
\]
On a compact set the curve may be modified so that its interior contains
$[-\mu_0,\mu_0]$, where $\lambda_0<0$ is a lower bound for the whole-line
Schr\"odinger operator and $\mu_0=\sqrt{-\lambda_0}$.  This compact
modification does not affect the high-energy argument.  In Kotani's sign
convention,
the single function on the two components is
\[
m(k)=
\begin{cases}
	-m_+^{\rm S}(0,-k^2),& \operatorname{Re}k>0,\\
	m_-^{\rm S}(0,-k^2),& \operatorname{Re}k<0.
\end{cases}
\]
If $q\in W^{2p-1,\infty}(\mathbb{R};\mathbb{R})$, then
Theorem~\ref{ t1}, applied to $q|_{\mathbb R_+}$ and to the
reflected potential $\check q$, with $N=2p-1$ and $n=p-1$, gives
\[
m(k)
=k-\sum_{j=1}^{p-1}c_j(0)k^{-j}+O(|k|^{-p})
\]
throughout $D_-$.  This verifies Kotani's condition
(M.2) with $L=p+1$.  Since a bounded real potential is bounded from
below, the Herglotz properties of the two half-line Weyl functions give the
nonreal positivity requirement in (M.1), while the reality of $q$ gives
$m(\overline{k})=\overline{m(k)}$.  To verify the remaining real-axis
positivity explicitly, let $G(z;x,y)$ denote the integral kernel of
$(H_q-z)^{-1}$.  If $x>\mu_0$, then $-x^2<\lambda_0$, and hence
\[
m(x)-m(-x)
=-\bigl(m_+^{\rm S}(0,-x^2)+m_-^{\rm S}(0,-x^2)\bigr)
=\frac{1}{G(-x^2;0,0)}>0.
\]
Here the last inequality follows from positivity of the resolvent below the
spectrum.  Together with the preceding choice of the compact part of the
curve, this verifies condition (M.1).  Kotani's flow theorem therefore
yields the following consequence.

\begin{corollary}
	\label{c2}Let $p\geq3$ be odd.  If
	$q\in W^{2p-1,\infty}(\mathbb{R};\mathbb{R})$, then Kotani's construction
	produces a global classical solution of the member of the KdV hierarchy
	indexed by $(p+1)/2$, with initial value $q$.
\end{corollary}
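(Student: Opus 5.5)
The corollary follows by feeding the spectral data built from $q$ into Kotani's flow theorem \cite{Kotani}. So the plan is to check that the function $m$ on the exterior domain $D_-$, defined above, satisfies Kotani's hypotheses (M.1) and (M.2) with $L=p+1$. For a member of the KdV hierarchy, the exponent $L$ should match the index through $L=2\cdot\frac{p+1}{2}$. Most of this verification is already in the discussion preceding the statement; the proof organizes it into three checks.

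\emph{Check (M.2).} Since $q\in W^{2p-1,\infty}(\mathbb R;\mathbb R)$, both $q|_{\mathbb R_+}$ and $\check q|_{\mathbb R_+}$ lie in $W^{N,\infty}(\mathbb R_+;\mathbb R)$ with $N=2p-1$. I apply Theorem~\ref{ t1} with $n=p-1\le N-1$.
\begin{itemize}
\item On $\{\operatorname{Re}k>0\}\cap D_-$ we have $\lambda>\omega(\eta)\ge\gamma_{p-1}(\eta)$ for large $|k|$, so $k\in\Omega^{\rm S}_{p-1,+}$. The theorem then gives $-m_+^{\rm S}(0,-k^2)=k-\sum_{j=1}^{p-1}c_j(0)k^{-j}+O(|k|^{-p})$, because $N-n-1=p-1$ and $n-N=-p$.
\item On the left component, $-k\in\Omega^{\rm S}_{p-1,+}$. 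Here I use \eqref{1.4}, \eqref{1.5} and the parity identity $c_j[\check q](0)=(-1)^{j-1}c_j[q](0)$. The parity identity follows by induction from the recursion, since each step contributes one derivative and $\check q'=-(q')^\vee$.
\end{itemize}
The two expansions then coincide, giving a single asymptotic expansion of $m$ on $D_-$. On a compact set, where the curve is modified, no asymptotic claim is needed.

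\emph{Check (M.1).} For nonreal $k$ in $D_-$, $-k^2$ lies in the resolvent set of the half-line problems. The Herglotz (Nevanlinna) properties of $m_\pm^{\rm S}$, transported through $z=-k^2$, give the required sign of $\operatorname{Im}$ in each quadrant. Reality of $q$ gives $m(\bar k)=\overline{m(k)}$. On the real segment $x>\mu_0$, I use the Green's function identity $G(z;0,0)=-1/(m_+^{\rm S}(0,z)+m_-^{\rm S}(0,z))$ in the present sign conventions. For $z=-x^2$ below $\lambda_0$, the resolvent $(H_q-z)^{-1}$ is positivity preserving with a continuous, strictly positive kernel, so $G(-x^2;0,0)>0$. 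This yields $m(x)-m(-x)>0$. The compact modification of the curve, which puts $[-\mu_0,\mu_0]$ in its interior, takes care of the region where the real-axis positivity argument is unavailable.

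With (M.1) and (M.2) verified, Kotani's theorem yields a global classical solution of the $(p+1)/2$-th KdV flow with initial datum $q$. The main obstacle is matching conventions with \cite{Kotani}: the sign of $m$, the placement of $D_-$, and the exact form of (M.2), in particular whether Kotani requires the truncation order $p-1$ with remainder $O(|k|^{-p})$ precisely, or uniformity on the full boundary curve. I would first pin down Kotani's normalization, and if needed adjust $\omega$ within the constraint $\omega\ge\gamma_{p-1}$ so that Theorem~\ref{ t1} applies uniformly. Positivity of $G$ below the spectrum I would justify by the standard Feynman--Kac or maximum-principle argument for bounded potentials.
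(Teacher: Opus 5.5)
Your proposal is correct and follows essentially the paper's own argument. Theorem~\ref{ t1} with $N=2p-1$, $n=p-1$, applied to $q|_{\mathbb R_+}$ and to $\check q$ via \eqref{1.4}--\eqref{1.5}, gives (M.2) with $L=p+1$ on $D_-$; the Herglotz property, $m(\overline k)=\overline{m(k)}$, the identity $m(x)-m(-x)=1/G(-x^2;0,0)>0$ and the compact modification of the curve give (M.1), and Kotani's theorem then applies, with your extra checks (the parity induction for $c_j[\check q](0)$ and the sign in the Green's function identity) consistent with the paper's conventions.
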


This improves the regularity threshold obtained in \cite{Kotani} for the
ergodic sufficient condition.  Indeed, Kotani proves that an ergodic
potential in $C_b^m(\mathbb{R})$ belongs almost surely to the relevant
spectral class whenever
$L\leq\bigl(m-3(p-1)\bigr)/6$; choosing the minimal value $L=p+1$
requires $m\geq9p+3$.
Theorem~\ref{ t1} verifies the required
Weyl asymptotic directly, removes the ergodicity assumption, and reduces
the regularity requirement to $2p-1$ bounded derivatives.

Let $\boldsymbol f_+(\cdot,z)$ and $\boldsymbol f_-(\cdot,z)$ denote
solutions of a Dirac equation square integrable near $+\infty$ and
$-\infty$, respectively.  For the displayed choice of the free diagonal
part, the natural small scalar coordinates are
\begin{equation}
	m_+(z)=\frac{f_{+,2}(0,z)}{f_{+,1}(0,z)}
	\quad\text{for }z\in\mathbb{C}_-,
	\qquad
	m_-(z)=\frac{f_{-,2}(0,z)}{f_{-,1}(0,z)}
	\quad\text{for }z\in\mathbb{C}_+.
	\label{1.6}
\end{equation}
Thus the two asymptotically small coordinates are naturally attached to
different half-planes.  In the opposite half-plane the Weyl line remains
well-defined, but the reciprocal quotient is the natural finite
coordinate.  Set
\begin{align*}
	\Omega_{n,+}^{\rm D}
	&=\{z\in\mathbb{C}_+:
	\operatorname{Im}z>\gamma_n(\operatorname{Re}z)\},\\
	\Omega_{n,-}^{\rm D}
	&=\{z\in\mathbb{C}_-:
	-\operatorname{Im}z>\gamma_n(\operatorname{Re}z)\}.
\end{align*}

\begin{theorem}
	\label{t3}
	Let $q\in W^{N,\infty}(\mathbb{R};\mathbb{C})$ and let $m_\pm$ be the
	Weyl functions of $D_q^{\rm sa}$ defined by
	\eqref{1.6}.  Then, as $|z|\to\infty$,
	\begin{equation}
		m_{\pm}(z)=\sum_{j=1}^{N-n-1}c_j(0)z^{-j}
		+O(|z|^{n-N}),\qquad z\in\Omega_{n,\mp}^{\rm D}.
		\label{1.7}
	\end{equation}
	The coefficients are determined by
	\[
	c_1=\frac{\overline q}{2},
	\qquad
	c_{j+1}=-\frac{i}{2}c_j'
	+\frac{q}{2}\sum_{\ell=1}^{j-1}c_\ell c_{j-\ell},
	\qquad j\geq1.
	\]
\end{theorem}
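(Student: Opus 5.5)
The approach is to derive a Riccati equation for $m_+$ (and, symmetrically, for $m_-$), solve it formally to obtain the stated recursion, and then control the error of a truncated expansion using the self-adjoint energy identity. I describe the case $m_+$ on $\Omega^{\rm D}_{n,-}$; $m_-$ on $\Omega^{\rm D}_{n,+}$ is identical with $+\infty$ replaced by $-\infty$.

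\emph{Riccati equation and formal series.} Write $D_q^{\rm sa}\boldsymbol f=z\boldsymbol f$ as
\[
f_1'=-i(zf_1-qf_2),\qquad f_2'=i(zf_2-\overline q f_1).
\]
The quotient $m(x)=f_{+,2}(x,z)/f_{+,1}(x,z)$ then satisfies
\[
m'=2izm-i\overline q-iqm^2 .
\]
Insert $m\sim\sum_j c_jz^{-j}$ and compare powers. The $z^0$ term gives $c_1=\overline q/2$. The $z^{-j}$ term gives $c_j'=2ic_{j+1}-iq\sum_{\ell=1}^{j-1}c_\ell c_{j-\ell}$, which is exactly the stated recursion. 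Each $c_j$ is a differential polynomial involving derivatives of $q$ up to order $j-1$. Hence for $q\in W^{N,\infty}$ the functions $c_1,\dots,c_N$ are bounded, and $c_1,\dots,c_{N-1}$ have bounded derivatives, uniformly in $x\in\mathbb R$.

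Set $M=\sum_{j=1}^{N}c_jz^{-j}$. Its residual
\[
R:=M'-(2izM-i\overline q-iqM^2)
\]
then satisfies $\sup_x|R|=O(|z|^{-N})$; the top-order derivative contributes only $c_N'z^{-N}$, and the quadratic tail is $O(|z|^{-N-1})$. Once $w=m-M=O(|z|^{n-N})$ is shown, dropping the terms $j\ge N-n$ of $M$ gives \eqref{1.7}.

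\emph{Linear equation for the error.} The function $w=m-M$ satisfies the \emph{linear} equation
\[
w'=\bigl(2iz-iq(m+M)\bigr)w+R,
\]
since $m^2-M^2=(m+M)w$. Note that $-iz+iqm=f_1'/f_1$. Put $g_1(x)=\exp\int_0^x(-iz+iqM)$ and $g_2=Mg_1$. Then $e^{\int_x^t(2iz-iq(m+M))}=\frac{f_1(x)g_1(x)}{f_1(t)g_1(t)}$, and the choice of solution decaying at $+\infty$ yields
\[
w(x)=-\int_x^\infty \frac{f_1(t)g_1(t)}{f_1(x)g_1(x)}\,R(t)\,dt .
\]
One must check that the solution decaying at $+\infty$ is the one selected by this representation. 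This follows because $m_+$ is bounded (see below) and because the representation defines a bounded solution. By Cauchy--Schwarz,
\[
|w(x)|\le \sup|R|\Bigl(\int_x^\infty\frac{|f_1(t)|^2}{|f_1(x)|^2}dt\Bigr)^{1/2}\Bigl(\int_x^\infty\frac{|g_1(t)|^2}{|g_1(x)|^2}dt\Bigr)^{1/2}.
\]

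\emph{Energy identities.} For the exact solution, a direct computation from the Dirac system gives
\[
\bigl(|f_1|^2-|f_2|^2\bigr)'=2\operatorname{Im}z\,(|f_1|^2+|f_2|^2).
\]
Integrate from $x$ to $\infty$ with $\operatorname{Im}z<0$ and $\boldsymbol f_+\in L^2$. This gives $|m_+|<1$ and
\[
\int_x^\infty|f_1|^2\le |f_1(x)|^2/(2|\operatorname{Im}z|).
\]
For the approximate solution $(g_1,g_2)$, the first Dirac equation holds exactly. The second holds up to an error $iRg_1$, because $g_2'=(M'+M(-iz+iqM))g_1$. The same computation therefore yields
\[
\bigl(|g_1|^2-|g_2|^2\bigr)'=2\operatorname{Im}z\,(|g_1|^2+|g_2|^2)+O(|R||g_1|^2).
\]
Since $|M|=O(|z|^{-1})$, the quantity $|g_1|^2-|g_2|^2$ is comparable to $|g_1|^2$. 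On $\Omega_{n,-}^{\rm D}$ we have $|\operatorname{Im}z|\ge c|z|^{-n}\gg|z|^{-N}\gtrsim|R|$ because $n\le N-1$. Hence the error is absorbed, and Gronwall gives
\[
|g_1(t)|^2\le C|g_1(x)|^2e^{-|\operatorname{Im}z|(t-x)},
\qquad\text{so}\qquad
\int_x^\infty|g_1(t)/g_1(x)|^2dt\le C/|\operatorname{Im}z|.
\]
Combining the two bounds,
\[
|w(0)|\le C|z|^{-N}/|\operatorname{Im}z|\le C|z|^{n-N},
\]
which is the claim.

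The main obstacle is this last integrating-factor estimate. The naive route uses the real part of $2iz-2iqM$, but the $z^{-2}$ terms of $qM$ are not sign-controlled and can swamp $|\operatorname{Im}z|\sim|z|^{-n}$ once $n\ge2$. This is exactly why I would route both kernel factors through the self-adjoint energy identity, applied exactly to $\boldsymbol f_+$ and approximately to $(g_1,Mg_1)$, rather than estimating exponents pointwise. The remaining points to verify carefully are:
\begin{itemize}
\item the uniformity of all constants in $x$, which comes from $W^{N,\infty}$;
\item that the approximate solution $(g_1,Mg_1)$ is sufficiently accurate (residual $O(|z|^{-N})$) for the energy identity to be absorbed, which needs $N$ terms in $M$.
\end{itemize}
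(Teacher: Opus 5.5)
Your argument is correct, and it takes a genuinely different route from the paper.

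\textbf{The paper's route.} The paper never linearizes around a formal Riccati series. It first applies the symmetry-preserving diagonalization of Lemma~\ref{l5}, whose gauges $e^{z^{-j}S_j}$ are $\sigma_3$-unitary for real $z$. This gives $\boldsymbol u'=(-iz\sigma_3+D_N+R_N)\boldsymbol u$ with $R_N=O(|z|^{-N})$. Since the entries of $D_N$ are purely imaginary on the real axis, it gets $|\operatorname{Re}d_j|\le C\eta|z|^{-2}$ and hence $\operatorname{Re}\beta\ge\eta$. It then solves the Riccati equation for $\omega=u_2/u_1$ by Banach's fixed-point theorem in a ball of radius $O(|z|^{-N}/\eta)$. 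Finally it reads off $m_+-c/a=O(|z|^{n-N})$ from $m_+=(c+d\omega)/(a+b\omega)$.

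\textbf{Your route.} You stay scalar. You replace both the gauge symmetry and the contraction by the current identity $(|f_1|^2-|f_2|^2)'=2\operatorname{Im}z\,|\boldsymbol f|^2$, used exactly for $\boldsymbol f_+$ and approximately for $(g_1,Mg_1)$.

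The two devices neutralize the obstacle you correctly identified. The $z^{-2}$ term $q\overline{q}'z^{-2}/4$ of $iqM$ has real part roughly $(|q|^2)'/(8\lambda^2)$. This is not sign-controlled pointwise, but it is a total derivative. In the paper it is removed by the gauge. In your identity it is absorbed into the weight $1-|M|^2$: the $z^{-2}$ part of $2\operatorname{Re}(iqM)$ cancels against $-(|M|^2)'/(1-|M|^2)$ at leading order.

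\textbf{What each route buys.} Your version is more elementary. It produces the coefficient recursion with no further work; the paper only asserts that the gauge cancellations agree with the Riccati comparison. It also shows plainly that the loss $1/\eta\lesssim|z|^n$ comes from two $L^2$ bounds.

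In exchange, you take existence of the $L^2$ solution from limit-point theory rather than constructing it. The paper's Picard iteration also yields analyticity in $z$. You also use self-adjointness essentially. For $D_q^{\rm ns}$ the identity becomes $(|f_1|^2+|f_2|^2)'=2\operatorname{Im}z\,(|f_1|^2-|f_2|^2)$. That is not sign-definite without a priori smallness of $f_2/f_1$, whereas the paper's scheme treats all three operators uniformly.

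\textbf{Minor slips} (none affect the argument):
\begin{itemize}
\item The error equation should read $w'=(2iz-iq(m+M))w-R$.
\item The defect in the second equation for $(g_1,Mg_1)$ is $Rg_1$, not $iRg_1$. So the cross term is $2\operatorname{Re}(\overline{M}R)|g_1|^2=O(|z|^{-N-1}|g_1|^2)$.
\item You need $c_N'\in L^\infty$, not only $c_1',\dots,c_{N-1}'$. This holds since $q\in W^{N,\infty}$.
\item The homogeneous term $C/(f_1g_1)$ is excluded because $f_1g_1\in L^1$, so $|f_1g_1|$ cannot stay bounded away from zero.
\end{itemize}
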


\begin{theorem}
	\label{t4}
	Let $q\in W^{N,\infty}(\mathbb{R};\mathbb{C})$ and let $m_\pm$ be the
	Weyl functions of $D_q^{\rm ns}$ defined by
	\eqref{1.6}.  If the lower-bound constant $c_n$ in
	\eqref{1.2} is chosen sufficiently large, then $m_+$ is analytic in
	$\Omega_{n,-}^{\rm D}$ and $m_-$ is analytic in
	$\Omega_{n,+}^{\rm D}$.  Moreover,
	\begin{equation}
		m_{\pm}(z)=\sum_{j=1}^{N-n-1}c_j(0)z^{-j}+O(|z|^{n-N}),
		\qquad z\in\Omega_{n,\mp}^{\rm D},
		\label{1.8}
	\end{equation}
	The coefficients are determined by
	\[
	c_1=-\frac{\overline q}{2},
	\qquad
	c_{j+1}=-\frac{i}{2}c_j'
	+\frac{q}{2}\sum_{\ell=1}^{j-1}c_\ell c_{j-\ell},
	\qquad j\geq1.
	\]
\end{theorem}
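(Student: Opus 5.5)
Throughout, write the Dirac system as $i\sigma_3 f' + Q f = z f$ with $Q=\begin{pmatrix}0&q\\-\overline q&0\end{pmatrix}$. Take $m_+$ on the half-plane $\mathbb C_-$; the case of $m_-$ on $\mathbb C_+$ follows from the reflection $x\mapsto -x$, exactly as \eqref{1.4} is used in the Schr\"odinger case.

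\emph{Riccati equation and formal expansion.} The components satisfy $if_1'+qf_2=zf_1$ and $-if_2'-\overline q f_1=zf_2$. Hence the quotient $m(x,z)=f_2(x,z)/f_1(x,z)$ solves
\[
m'=2izm+i\overline q+iq\,m^2 .
\]
Substituting the ansatz $m\sim\sum_j c_j(x) z^{-j}$ and matching powers of $z$ gives $c_1=-\overline q/2$. At the next orders the $z^0$-balance gives $c_{j+1}=-\tfrac{i}{2}c_j'+\tfrac q2\sum_{\ell=1}^{j-1}c_\ell c_{j-\ell}$, which is the stated recursion. The sign differs from Theorem~\ref{t3} only because $\overline q$ is replaced by $-\overline q$.

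Let $M_K=\sum_{j=1}^{K}c_j(x)z^{-j}$ with $K=N-1$, which is admissible since $c_j\in W^{N-j+1,\infty}$. Then
\[
M_K'-2izM_K-i\overline q-iqM_K^2=R(x,z)=O(|z|^{-K})
\]
uniformly in $x\ge0$, with $R$ bounded using $\|q\|_{W^{N,\infty}}$.

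\emph{Existence and analyticity of the Weyl solution in $\Omega_{n,-}^{\rm D}$.} The operator $D_q^{\rm ns}$ is not self-adjoint, so we cannot appeal to the spectral theorem. Instead, for $\operatorname{Im}z<0$ with $|\operatorname{Im} z|>2\|q\|_\infty$, we build the decaying solution directly.

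Write $f_1=\exp\bigl(-iz x-i\int_0^x q\,m\bigr)$; this is decaying, since $|e^{-izx}|=e^{\operatorname{Im}z\,x}$. Look for $m$ as a bounded solution of the Riccati equation on $[0,\infty)$. Set $m=M_K+w$. Then $w$ solves
\[
w'=2i z w+ 2iqM_Kw+iqw^2-R .
\]
Here $\operatorname{Re}(2iz)=-2\operatorname{Im}z>0$, so integrating from $+\infty$ gives the fixed-point formulation
\[
w(x)=\int_x^\infty e^{2iz(x-t)}\bigl(R-2iqM_Kw-iqw^2\bigr)(t)\,dt .
\]
The kernel has $L^1$ norm $(2|\operatorname{Im}z|)^{-1}$, and $|M_K|=O(|z|^{-1})$. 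A contraction on a ball of radius $\sim|R|_\infty/|\operatorname{Im} z|$ in $L^\infty$ then works, provided $|\operatorname{Im}z|$ dominates $\|q\|_\infty$ and the lower-order constants. This is why $c_n$ must be taken large.

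The resulting $m$ is bounded, so $f=(f_1,mf_1)$ lies in $L^2$ near $+\infty$. This $f$ is the unique $L^2$ solution up to scaling, because a second independent solution grows like $e^{-\operatorname{Im} z\,x}$ (the Wronskian is constant). Analyticity in $z$ follows from the uniform convergence of the analytic Picard iterates.

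\emph{Sharp remainder.} The estimate above gives $|w|\lesssim |z|^{-K}/|\operatorname{Im}z|\lesssim |z|^{-(N-1)}|z|^{n}$, because $|\operatorname{Im}z|\ge c_n(1+|\operatorname{Re} z|)^{-n}\gtrsim|z|^{-n}$. Since $|\operatorname{Im} z|$ is small relative to $|z|$ only when $|z|\approx|\operatorname{Re}z|$, this bound is $O(|z|^{n-N+1})$.

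To reach $O(|z|^{n-N})$, truncate the expansion at order $N-n-1$. The terms $c_jz^{-j}$ with $N-n\le j\le N-1$ are $O(|z|^{n-N})$. So it suffices to gain one power in the estimate of $w$.

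This gain is the main obstacle. I would get it by integrating by parts once in the formula for $w$: $\int_x^\infty e^{2iz(x-t)}R(t)\,dt=R(x)/(2iz)+\dots$, which uses $R'\in L^\infty$. That costs one derivative. Alternatively, simply take $K=N$ formally, letting $c_N$ involve $q^{(N-1)}$, so that $R=O(|z|^{-N})$ needs only $q^{(N)}$ bounded. This second route yields $|w|\lesssim|z|^{-N}|z|^{n}$ directly, and I would try it first. The quadratic and linear terms in $w$ then contribute only higher-order corrections, absorbed by the contraction.

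Evaluating at $x=0$ gives \eqref{1.8} for $m_+$. The case $m_-$ on $\Omega_{n,+}^{\rm D}$ follows by the same argument after reflection.
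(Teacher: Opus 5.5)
Your route (a formal Riccati series $M_K$, then a contraction for $w=m-M_K$) differs from the paper's. The fixed-point step does not reach the part of $\Omega_{n,-}^{\rm D}$ that the theorem is about.

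Write $z=\lambda-i\eta$. Near the boundary, $\eta\approx c_n(1+|\lambda|)^{-n}\to0$ as $|\lambda|\to\infty$. Enlarging $c_n$ therefore only helps on bounded sets of $z$, and your proviso that $|\operatorname{Im}z|$ dominate $\|q\|_\infty$ fails at high energy once $n\ge1$. If you estimate your map honestly, three terms break down:
\begin{itemize}
\item The linear term $2iqM_Kw$ has Lipschitz constant about $\|q\|_\infty^2/(2|z|\eta)$. This is of order $\|q\|_\infty^2|z|^{n-1}/c_n$, which is unbounded for $n\ge2$.
\item The quadratic term has the $O(1)$ coefficient $q$ and contributes about $\|q\|_\infty\|R\|_\infty/\eta^2\sim|z|^{2n-N}$. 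This is not small once $2n\ge N$.
\item Decay of $f_1=\exp(-izx+i\int_0^x q\,m)$ needs $\|q\|_\infty\|m\|_\infty<\eta$, which again forces $\eta\gtrsim|z|^{-1}$.
\end{itemize}
So your argument closes only for $n\le1$, with $c_1\gtrsim\|q\|_\infty^2$. Your regularity count with $K=N$ is fine; the fixed-point step is what fails.

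The missing ingredient is the symmetry $A(x,z)=-A(x,\overline z)^*$. You never use it, and without it the conclusion fails for $n\ge2$. To see this, replace $i\overline q$ by $ir$ with constants satisfying $\operatorname{Im}(qr)\neq0$. Your estimates see only the sizes of $q$ and $M_K$, so they go through unchanged. Yet the exponents are $\pm i\sqrt{z^2+qr}$, and the decay rate of the small-$m$ branch is $\eta-\operatorname{Im}(qr)/(2\lambda)+\dots$. This rate changes sign on a curve lying inside $\Omega_{n,-}^{\rm D}$ for $n\ge2$, and below that curve the $L^2$ solution has $m\approx 2z/q$.

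For $r=\overline q$, the real part of the rate correction $-2iqM_K$ is only $O(\eta|z|^{-2})$ modulo total derivatives. To exploit this you must do two things:
\begin{itemize}
\item move that correction into the integrating factor rather than treating it perturbatively;
\item make the coupling, including the quadratic coefficient, of size $O(|z|^{-N})$.
\end{itemize}
The paper does both at once with Lemma~\ref{l5} for $J=I$. The gauge $T_N$ is unitary for real $z$, so $D_N$ has purely imaginary entries there. This gives $|\operatorname{Re}d_j|\le C\eta|z|^{-2}$ and hence $\operatorname{Re}\beta\ge\eta$. In the Riccati equation for $\omega=u_2/u_1$, both $r_{21}$ and the quadratic coefficient $r_{12}$ are $O(|z|^{-N})$. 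The contraction ratio is therefore $O(|z|^{n-N})$ for every $n\le N-1$.

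A minor point: from $f_1'=(-iz+iqm)f_1$, the Riccati equation is $m'=i\overline q+2izm-iqm^2$. Your sign $+iqm^2$ would put $-\frac q2\sum c_\ell c_{j-\ell}$ into the recursion instead of the stated $+\frac q2\sum c_\ell c_{j-\ell}$.
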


The novelty of Theorems~\ref{ t1},
\ref{t3} and \ref{t4} is
the uniformity of the expansions in domains that close up against the
real axis at the polynomial rate prescribed by \eqref{1.2}.  These
domains contain high-energy sequences for which
$|\operatorname{Im}z|/|z|\to0$ and therefore lie genuinely beyond the
fixed-sector results of Harris.  The loss of $n$ powers in the remainder
is the cost of inverting the weakened exponential decay.

The hypotheses of Theorem~\ref{ t1} and
Corollary~\ref{c2} should be regarded as sufficient conditions,
and no necessity or optimality is claimed here.  In particular, the
boundedness assumptions on the potential and its derivatives provide
a convenient means of obtaining uniform control in the diagonalization
and fixed-point arguments, but they are not expected to be necessary
for near-spectrum Weyl asymptotics.  The harmonic-oscillator example in
Section~\ref{sec:open-problem} shows that a complete expansion may
persist for an unbounded potential on $\Omega_{0,+}^{\rm S}$, while it
also indicates that, when the boundary of the spectral domain is
allowed to approach the spectrum more rapidly, the admissible rate
must reflect the growth of the potential at infinity.  Determining the
optimal balance between regularity, growth at infinity, and the rate
of approach to the spectrum remains an open problem.

Section 2 proves a unified near-identity diagonalization theorem for
$2\times2$ systems.  Section 3 combines it with Banach's fixed-point
theorem and applies this scheme successively to the Schr\"odinger,
self-adjoint Dirac, and non-self-adjoint Dirac operators.  Section 4
concludes with an unbounded example and the open problem suggested by it.

\section{Diagonalization lemma}
\label{sec:unified-diagonalization}

In this section we establish a diagonalization result that will be used in the proofs below.  For a matrix $X=(x_{pq})_{p,q=1}^{2}$, write
\[
X^{\mathrm d}=\operatorname{diag}(x_{11},x_{22}),
\qquad
X^{\mathrm o}=X-X^{\mathrm d}.
\]
Let
\[
\Lambda=\operatorname{diag}(\lambda _1,\lambda _2),
\qquad \lambda _1\ne\lambda _2,
\qquad
\sigma _3=\operatorname{diag}(1,-1).
\]
For $J\in\{I,\sigma _3\}$, define the reflected $J$-adjoint operation by
\begin{equation}
	\mathcal R_J[F](x,\zeta)
	:=J F(x,\overline\zeta)^*J.
	\label{2.1}
\end{equation}

We first record the elementary calculation behind the successive gauge
transformations.  Put $\operatorname{ad}_{H}(X)=[H,X]$.  If $G=e^{H}$, then
the Campbell expansion and the differentiation formula for an exponential give
\begin{align}
	G^{-1}XG
	&=e^{-H}Xe^{H}
	=\sum_{n=0}^{\infty}\frac{(-1)^n}{n!}
	\operatorname{ad}_{H}^{n}(X),
	\label{2.2}\\
	(G^{-1})'G
	&=-G^{-1}G'
	=-\int_{0}^{1}e^{-sH}H'e^{sH}\,ds =-\sum_{n=0}^{\infty}\frac{(-1)^n}{(n+1)!}
	\operatorname{ad}_{H}^{n}(H').
	\label{2.3}
\end{align}
Consequently, under $y=Gu$, the equation $y'=Ay$ becomes $u'=A^{G}u$ with
\begin{equation}
	A^{G}=G^{-1}AG-G^{-1}G'
	=A+[A,H]-H'
	+\frac12[[A,H],H]+\frac12[H,H']+\cdots .
	\label{2.4}
\end{equation}
The following lemma is the common diagonalization statement needed in
Section~3.

\begin{lemma}
	\label{l5}
	Let $M\geq1$, and consider
	\begin{equation}
		y'=A(x,\zeta)y,
		\qquad
		A(x,\zeta)=\zeta\Lambda+
		\sum_{j=0}^{M-1}\zeta^{-j}A_j(x),
		\label{2.5}
	\end{equation}
	for $x \in I \subset \mathbb R$.  Suppose that
	\[
	A_j\in W^{M-j,\infty}(I;\mathbb C^{2\times2}),
	\qquad 0\leq j\leq M-1,
	\]
	and let $\zeta\to\infty$ in a fixed closed sector on which the powers of
	$\zeta^{-1}$ are defined.  Then there is an analytic transformation
	$y=T_M(x,\zeta)u$ such that, uniformly for $x\in I$,
	\begin{equation}
		T_M=I+O(|\zeta|^{-1}),\qquad
		T_M^{-1}=I+O(|\zeta|^{-1}),\qquad
		\det T_M=1,
		\label{2.6}
	\end{equation}
	and
	\begin{equation}
		u'=\bigl(\zeta\Lambda+D_M(x,\zeta)+R_M(x,\zeta)\bigr)u,
		\label{2.7}
	\end{equation}
	where
	\[
	D_M(x,\zeta)=\sum_{j=0}^{M-1}\zeta^{-j}D_j(x)
	\]
	is diagonal and
	\begin{equation}
		\|R_M(\cdot,\zeta)\|_{L^\infty(I)}=O(|\zeta|^{-M}).
		\label{2.8}
	\end{equation}
	If $A_0=0$, then the first two estimates in
	\eqref{2.6} improve to $T_M^{\pm1}=I+O(|\zeta|^{-2})$.
	If $A_0^{\mathrm d}=0$, then $D_0=0$.
	
	Assume in addition that $\Lambda=\pm i\sigma _3$, that the sector is invariant under complex
	conjugation, and that one of the following identities holds:
	\begin{align}
		A(x,\zeta)\sigma _3
		&=-\sigma _3A(x,\overline\zeta)^*,
		\label{2.9}\\
		A(x,\zeta)
		&=-A(x,\overline\zeta)^*.
		\label{2.10}
	\end{align}
	Then the transformation can be chosen to preserve the corresponding
	symmetry.  More precisely, in case \eqref{2.9},
	\[
	T_M(x,\overline\zeta)^*\sigma _3T_M(x,\zeta)=\sigma _3,
	\]
	whereas in case \eqref{2.10},
	\[
	T_M(x,\overline\zeta)^*T_M(x,\zeta)=I.
	\]
	In either case the transformed coefficient in
	\eqref{2.7} has the same symmetry as $A$.
	In particular, for real $\zeta$, the diagonal entries of
	$D_M(x,\zeta)$ are purely imaginary.
\end{lemma}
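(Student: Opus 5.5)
We will build $T_M$ as a product of $M$ successive exponential gauge transformations,
\[
T_M=e^{H_1}e^{H_2}\cdots e^{H_M},\qquad H_k=\zeta^{-k}P_k(x),
\]
with each $P_k$ off-diagonal.  Then $\operatorname{tr}H_k=0$, so $\det e^{H_k}=e^{\operatorname{tr}H_k}=1$, which gives $\det T_M=1$.  The induction hypothesis after $k-1$ steps is that the coefficient has the form
\[
\zeta\Lambda+\sum_{j=0}^{M-1}\zeta^{-j}A_j^{(k-1)}(x)+O(|\zeta|^{-M}),
\]
where $A_j^{(k-1)}$ is diagonal for $j\le k-2$ and $A_j^{(k-1)}\in W^{M-j,\infty}$.

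At step $k$ we kill the off-diagonal part at order $\zeta^{-(k-1)}$.  By \eqref{2.4}, the leading new contribution at that order is
\[
[\zeta\Lambda,H_k]=\zeta^{-(k-1)}[\Lambda,P_k].
\]
Since $\operatorname{ad}_\Lambda$ acts on off-diagonal matrices by multiplying the $(1,2)$ entry by $\lambda_1-\lambda_2$ and the $(2,1)$ entry by $\lambda_2-\lambda_1$, it is invertible there because $\lambda_1\neq\lambda_2$.  We therefore set
\[
P_k=\operatorname{ad}_\Lambda^{-1}\bigl((A_{k-1}^{(k-1)})^{\mathrm o}\bigr),
\]
which cancels the off-diagonal term.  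All remaining terms produced by \eqref{2.2}--\eqref{2.4} lie at order $\zeta^{-k}$ or lower:
\begin{itemize}
\item $[A_j^{(k-1)},H_k]$ contributes at order $\zeta^{-j-k}$;
\item $-H_k'$ contributes $-\zeta^{-k}P_k'$;
\item the higher commutators with $\zeta\Lambda$ contribute $\zeta^{1-mk}$ for $m\geq2$.
\end{itemize}
So the diagonal parts at orders $\le k-1$ are untouched.  The new coefficients are finite differential-polynomial combinations of the old ones.

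Regularity must be tracked carefully.  $P_k$ inherits $W^{M-k+1,\infty}$ regularity, and $P_k'$ appears at order $\zeta^{-k}$, where only $W^{M-k,\infty}$ is needed.  This closes the induction.  Terms of order beyond $\zeta^{-(M-1)}$ are collected into $R_M$, together with the tails of the exponential series.  These tails converge uniformly because $\|H_k\|_{L^\infty}=O(|\zeta|^{-k})$, and every coefficient involved is in $L^\infty(I)$.  This yields \eqref{2.7}--\eqref{2.8} and $T_M^{\pm1}=I+O(|\zeta|^{-1})$.

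If $A_0=0$, then $P_1=0$, the product effectively begins with $e^{H_2}$, and $T_M^{\pm1}=I+O(|\zeta|^{-2})$.  If $A_0^{\mathrm d}=0$, then $D_0=(A_0^{(1)})^{\mathrm d}$.  The first step changes only orders $\le\zeta^{-1}$ beyond order zero, since $[A_0,H_1]$ is $O(\zeta^{-1})$.  Hence $D_0=A_0^{\mathrm d}=0$.

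For the symmetry statements, take $\Lambda=\pm i\sigma_3$.  In case \eqref{2.9}, the aim is $\sigma_3H_k(x,\bar\zeta)^*\sigma_3=-H_k(x,\zeta)$, i.e.\ $\mathcal R_{\sigma_3}[H_k]=-H_k$.  Then
\[
\sigma_3\bigl(e^{H_k(\bar\zeta)}\bigr)^*\sigma_3=e^{-H_k(\zeta)},
\]
so $T^*(\bar\zeta)\sigma_3T(\zeta)=\sigma_3$.  Case \eqref{2.10} is the same with $J=I$, using $\mathcal R_I$.  To get this property for $H_k$, we argue inductively that $A^{(k-1)}$ satisfies $\mathcal R_J[A^{(k-1)}]=-A^{(k-1)}$.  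Coefficientwise this reads $J(A_j^{(k-1)})^*J=-A_j^{(k-1)}$, since $\bar\zeta^{-j}$ conjugates back to $\zeta^{-j}$.  The map $X\mapsto JX^*J$ commutes with taking off-diagonal parts.  It also intertwines $\operatorname{ad}_\Lambda$, up to sign, because $\Lambda^*=-\Lambda$ commutes with $J$.  It follows that $JP_k^*J=-P_k$, which is what is needed.

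The transformed coefficient keeps the symmetry by a direct computation.  Write $B=G^{-1}AG-G^{-1}G'$ and use $G^{-1}=JG(\bar\zeta)^*J$.  Then $\mathcal R_J[B]=-B$ follows.  Evaluating at real $\zeta$ gives $JD^*J=-D$ for the diagonal matrix $D$, so its entries are purely imaginary.

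The main obstacle is the bookkeeping that makes the error estimate uniform.  One must check that every term discarded into $R_M$ involves only derivatives permitted by the $W^{M-j,\infty}$ hypotheses.  One must also check that the infinite Campbell series in \eqref{2.2}--\eqref{2.3} are controlled in $L^\infty(I)$ uniformly in the sector.  I would handle both by bounding $\|\operatorname{ad}_H^n(X)\|\le(2\|H\|)^n\|X\|$.
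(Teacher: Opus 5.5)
Your proposal is correct and is essentially the paper's proof. It uses the same product $T_M=e^{H_1}\cdots e^{H_M}$ of off-diagonal gauges $H_k=\zeta^{-k}P_k$ fixed by the homological equation, gets $\det T_M=1$ from $\operatorname{tr}H_k=0$, and obtains the symmetry through $\mathcal R_J$, uniqueness of the off-diagonal solution, $\mathcal R_J[e^{H_k}]=e^{-H_k}$ and order reversal, with your regularity bookkeeping somewhat more explicit than the paper's. The only slip is a sign: because the new term at order $\zeta^{-(k-1)}$ is $[\Lambda,P_k]$, cancellation needs $P_k=-\operatorname{ad}_\Lambda^{-1}\bigl((A^{(k-1)}_{k-1})^{\mathrm o}\bigr)$; also, $X\mapsto JX^*J$ commutes with $\operatorname{ad}_\Lambda$ exactly, not merely up to sign, and that exact commutation is what gives $JP_k^*J=-P_k$.
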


\begin{proof}
	We first carry out the diagonalization without imposing a symmetry.  Suppose
	that, after $j$ steps, the coefficient matrix has the form
	\[
	A^{(j)}(x,\zeta)
	=\zeta\Lambda+
	\sum_{\ell=0}^{j-1}\zeta^{-\ell}D_\ell(x)
	+\zeta^{-j}B_j(x)+O(|\zeta|^{-j-1}),
	\]
	where $D_0,\ldots,D_{j-1}$ are diagonal.  For $j=0$ this is simply
	\eqref{2.5}, with $B_0=A_0$.
	
	Since $\lambda _1\ne\lambda _2$, the map
	$X^{\mathrm o}\mapsto[\Lambda,X^{\mathrm o}]$ is invertible on the
	off-diagonal matrices.  Hence the homological equation
	\begin{equation}
		[\Lambda,S_{j+1}]=-B_j^{\mathrm o},
		\qquad S_{j+1}^{\mathrm d}=0,
		\label{2.11}
	\end{equation}
	has a unique solution, namely
	\[
	(S_{j+1})_{pq}
	=-\frac{(B_j)_{pq}}{\lambda _p-\lambda _q}
	\quad(p\ne q),
	\qquad (S_{j+1})_{pp}=0.
	\]
	Set
	\[
	H_{j+1}=\zeta^{-j-1}S_{j+1},
	\qquad
	u_j=e^{H_{j+1}}u_{j+1}.
	\]
	The leading commutator in \eqref{2.4} is
	\begin{equation}
		[\zeta\Lambda,H_{j+1}]
		=\zeta^{-j}[\Lambda,S_{j+1}]
		=-\zeta^{-j}B_j^{\mathrm o}.
		\label{2.12}
	\end{equation}
	All commutators in \eqref{2.4} with the lower-order part of $A^{(j)}$, the derivative
	$H_{j+1}'$, and the terms containing at least two copies of $H_{j+1}$
	are $O(|\zeta|^{-j-1})$.  Therefore
	\begin{align*}
		A^{(j+1)}
		&=e^{-H_{j+1}}A^{(j)}e^{H_{j+1}}
		-e^{-H_{j+1}}(e^{H_{j+1}})'\\
		&=\zeta\Lambda+
		\sum_{\ell=0}^{j-1}\zeta^{-\ell}D_\ell
		+\zeta^{-j}\bigl(B_j+[\Lambda,S_{j+1}]\bigr)
		+O(|\zeta|^{-j-1})\\
		&=\zeta\Lambda+
		\sum_{\ell=0}^{j}\zeta^{-\ell}D_\ell
		+O(|\zeta|^{-j-1}),
		\qquad D_j=B_j^{\mathrm d}.
	\end{align*}
	This proves the induction step.
	
	For completeness, the first step can be seen explicitly to one order beyond
	the cancellation.  With $H_1=\zeta^{-1}S_1$ and
	$[\Lambda,S_1]=-A_0^{\mathrm o}$, formula \eqref{2.4} yields
	\begin{equation}
		A^{(1)}
		=\zeta\Lambda+D_0+\zeta^{-1}B_1+O(|\zeta|^{-2}),
		\qquad D_0=A_0^{\mathrm d},
		\label{2.13}
	\end{equation}
	where
	\[
	B_1=A_1-S_1'+[A_0,S_1]
	+\frac12[[\Lambda,S_1],S_1].
	\]
	The next step takes the off-diagonal part of this precise matrix $B_1$ and
	solves $[\Lambda,S_2]=-B_1^{\mathrm o}$.  Thus the above procedure is not
	merely formal: at each order the already determined coefficient is used in
	the next homological equation.
	
	After $M$ steps, put
	\begin{equation}
		T_M=e^{H_1}e^{H_2}\cdots e^{H_M}.
		\label{2.14}
	\end{equation}
	Since $H_j=O(|\zeta|^{-j})$, this product and its inverse satisfy
	\eqref{2.6}.  Each $S_j$ is off diagonal, so
	$\operatorname{tr}H_j=0$ and
	$\det(e^{H_j})=e^{\operatorname{tr}H_j}=1$; hence $\det T_M=1$.
	The assumed Sobolev regularity supplies exactly the derivatives needed when
	$H_j'$ is formed at the $j$th step, and the expansions above, with their
	integral remainders, are uniform on $I$.  This proves
	\eqref{2.7} and
	\eqref{2.8}.  If $A_0=0$, then $S_1=H_1=0$,
	which gives $T_M^{\pm1}=I+O(|\zeta|^{-2})$.  The assertion
	$A_0^{\mathrm d}=0\Rightarrow D_0=0$ follows from
	\eqref{2.13}.
	
	We now give the symmetry calculation in detail.  Since $J^2=I$, the operation
	introduced in \eqref{2.1} satisfies
	\begin{equation}
		\mathcal R_J[FG]=\mathcal R_J[G]\mathcal R_J[F],
		\qquad
		\mathcal R_J([F,G])
		=-[\mathcal R_J(F),\mathcal R_J(G)].
		\label{2.15}
	\end{equation}
	For $J=\sigma _3$, the identity $\mathcal R_J[A]=-A$ is equivalent to
	\eqref{2.9}; for $J=I$, it is exactly
	\eqref{2.10}.  Moreover,
	$\mathcal R_J[\Lambda]=-\Lambda$ for
	$\Lambda=\pm i\sigma _3$ and either choice of $J$.
	
	Assume inductively that $\mathcal R_J[A^{(j)}]=-A^{(j)}$.
	Comparison of the coefficients in its expansion in powers of $\zeta^{-1}$
	gives
	\begin{equation}
		\mathcal R_J[B_j^{\mathrm o}]=-B_j^{\mathrm o}.
		\label{2.16}
	\end{equation}
	Apply $\mathcal R_J$ to the homological equation.  By
	\eqref{2.15},
	\[
	\mathcal R_J([\Lambda,S_{j+1}])
	=-[\mathcal R_J(\Lambda),\mathcal R_J(S_{j+1})]
	=[\Lambda,\mathcal R_J(S_{j+1})],
	\]
	whereas the right-hand side becomes
	$-\mathcal R_J[B_j^{\mathrm o}]=B_j^{\mathrm o}$.  Hence
	\[
	[\Lambda,\mathcal R_J(S_{j+1})]=B_j^{\mathrm o},
	\qquad
	[\Lambda,-\mathcal R_J(S_{j+1})]=-B_j^{\mathrm o}.
	\]
	Both $S_{j+1}$ and $-\mathcal R_J(S_{j+1})$ are off diagonal and solve
	\eqref{2.11}.  By uniqueness,
	\begin{equation}
		\mathcal R_J[S_{j+1}]=-S_{j+1},
		\qquad
		\mathcal R_J[H_{j+1}]=-H_{j+1}.
		\label{2.17}
	\end{equation}
	
	It remains to verify that a gauge step actually preserves the symmetry of the
	differential equation.  Let $G=e^{H_{j+1}}$.  From
	\eqref{2.17},
	\begin{equation}
		\mathcal R_J[G]=e^{\mathcal R_J[H_{j+1}]}
		=e^{-H_{j+1}}=G^{-1}.
		\label{2.18}
	\end{equation}
	Using the reversed product rule in \eqref{2.15}, we obtain
	\begin{align*}
		\mathcal R_J[G^{-1}A^{(j)}G]
		&=G^{-1}(-A^{(j)})G,\\
		\mathcal R_J[G^{-1}G']
		&=(\mathcal R_J[G])'\mathcal R_J[G^{-1}]
		=(G^{-1})'G
		=-G^{-1}G'.
	\end{align*}
	Consequently,
	\begin{align*}
		\mathcal R_J[A^{(j+1)}]
		&=\mathcal R_J\bigl[G^{-1}A^{(j)}G-G^{-1}G'\bigr]\\
		&=-G^{-1}A^{(j)}G+G^{-1}G'
		=-A^{(j+1)}.
	\end{align*}
	This completes the symmetry induction.  Finally,
	\eqref{2.18} and the order reversal under $\mathcal R_J$ give
	\[
	\mathcal R_J[T_M]
	=e^{-H_M}\cdots e^{-H_1}=T_M^{-1},
	\]
	which is precisely the asserted $J$-unitarity of $T_M$.  Since $J$ and every
	$D_j$ are diagonal, the diagonal part of
	$\mathcal R_J[D_M]=-D_M$ is, for real $\zeta$,
	\[
	D_M(x,\zeta)^*=-D_M(x,\zeta).
	\]
	Thus both diagonal entries of $D_M(x,\zeta)$ are purely imaginary, as
	claimed.
\end{proof}

\section{Proof of theorems}
\label{sec:applications}

In this section we combine the diagonalization lemma with a Volterra
equation for the stable graph.

\subsection{The Schr\"odinger operator}
\begin{proof}[Proof of Theorem~\ref{ t1}]
	We prove the assertion for $m_+^{\rm S}$.  The equation
	$-f''+qf=-k^2f$ is equivalent to
	\[
	\boldsymbol y'=
	\left(k\begin{pmatrix}0&1\\1&0\end{pmatrix}
	+k^{-1}\begin{pmatrix}0&0\\q&0\end{pmatrix}\right)
	\boldsymbol y,
	\qquad
	\boldsymbol y=\begin{pmatrix}f\\k^{-1}f'\end{pmatrix}.
	\]
	Set
	\[
	P=\frac1{\sqrt2}\begin{pmatrix}1&1\\-1&1\end{pmatrix},
	\qquad \zeta=ik,
	\qquad \boldsymbol v=P^{-1}\boldsymbol y.
	\]
	Then
	\begin{equation}
		\boldsymbol v'=A_{S}(x,\zeta)\boldsymbol v,
		\qquad
		A_{S}(x,\zeta)=i\zeta\sigma_3+i\zeta^{-1}B_{S}(x),
		\label{3.1}
	\end{equation}
	where
	\[
	B_{S}=\frac q2\begin{pmatrix}-1&-1\\1&1\end{pmatrix}.
	\]
	Since $q$ is real,
	\[
	A_{S}(x,\zeta)\sigma_3
	=-\sigma_3A_{S}(x,\overline\zeta)^*.
	\]
	Apply Lemma \ref{l5} with
	$M=N+1$, $\Lambda=i\sigma_3$, $A_0=0$, and $A_1=iB_{S}$.
	The regularity requirement is exactly $q\in W^{N,\infty}$.  We obtain
	\begin{equation}
		\boldsymbol v=T_{N+1} \boldsymbol u,
		\qquad
		\boldsymbol u'=\bigl(i\zeta\sigma_3+D_{N+1} 
		+R_{N+1} \bigr)\boldsymbol u,
		\label{3.2}
	\end{equation}
	where
	\begin{equation}
		T_{N+1} =I+O(|k|^{-2}),
		\qquad
		\lVert R_{N+1} (\cdot,\zeta)\rVert_\infty
		\leq C_N|k|^{-N-1}.
		\label{3.3}
	\end{equation}
	The improved estimate for $T_{N+1} $ follows from $A_0=0$, which
	makes the first gauge $H_1$ vanish; it also makes the Laurent expansion of
	$D_{N+1} $ start with $\zeta^{-1}$.  The diagonal entries of
	$D_{N+1} (x,\xi)$ are purely imaginary for real $\xi$.
	
	Write $D_{N+1} =\operatorname{diag}(d_1,d_2)$ and
	$R_{N+1} =(r_{ij})$.  The quotient $\omega=u_2/u_1$ satisfies
	\[
	\omega'=\beta\omega+r_{21}-r_{12}\omega^2,
	\qquad
	\beta=-2i\zeta+d_2-d_1+r_{22}-r_{11}.
	\]
	Let $k=\lambda+i\eta\in\Omega_{n,+}^{\rm S}$.  Since
	$\zeta=ik=-\eta+i\lambda$, one has $\operatorname{Im}\zeta=\lambda$.
	The pure-imaginary property of $D_{N+1} $ on the real axis gives
	\[
	|\operatorname{Re}d_j(x,\zeta)|
	\leq C\lambda|\operatorname{Re}\zeta|^{-2}
	\]
	when $|\operatorname{Re}\zeta|$ is large compared with $\lambda$; if
	$\lambda$ is comparable with $|\zeta|$, then $d_j=O(|\zeta|^{-1})$.
	Together with \eqref{3.3}, this yields
	\begin{equation}
		\operatorname{Re}\beta(x,k)\geq\lambda
		\label{3.4}
	\end{equation}
	for all sufficiently large $|k|$.  Moreover, the lower bound in
	\eqref{1.2} implies $\lambda^{-1}\leq C(1+|k|)^n$.  Put
	\[
	\varepsilon_k=
	\max\{\lVert r_{12}(\cdot,k)\rVert_\infty,
	\lVert r_{21}(\cdot,k)\rVert_\infty\},
	\qquad \rho_k=\frac{\varepsilon_k}{\lambda}.
	\]
	Then
	\[
	\rho_k\leq\frac{C_N}{\lambda|k|^{N+1}}
	=O(|k|^{n-N-1})=o(1).
	\]
	Let $R_k=2\rho_k$ and let $B_{R_k}$ be the closed ball of radius $R_k$
	in $L^\infty(\mathbb R_+)$.  On this complete metric space define an operator by
	\begin{equation*}
		(\mathcal T_k\omega)(x)=-\int_x^\infty
		\exp\left(-\int_x^t\beta(s,k)\,ds\right)
		\bigl(r_{21}(t,k)-r_{12}(t,k)\omega(t)^2\bigr)\,dt.
	\end{equation*}
	By \eqref{3.4}, the exponential kernel is bounded by
	$e^{-\lambda(t-x)}$.  Since $\rho_k=o(1)$, we may assume that
	$\rho_k\leq1/4$.  For $\omega,\omega_1,\omega_2\in B_{R_k}$,
	\begin{align*}
		\lVert\mathcal T_k\omega\rVert_\infty
		&\leq\rho_k(1+R_k^2)\leq R_k,\\
		\lVert\mathcal T_k\omega_1-\mathcal T_k\omega_2\rVert_\infty
		&\leq2\rho_kR_k\lVert\omega_1-\omega_2\rVert_\infty
		\leq\frac14\lVert\omega_1-\omega_2\rVert_\infty.
	\end{align*}
	Thus Banach's fixed-point theorem applies exactly on the stated ball and
	produces a unique fixed point.  Differentiating its integral equation shows
	that it solves the Riccati equation above, and
	\begin{equation}
		\lVert\omega(\cdot,k)\rVert_\infty
		\leq R_k\leq\frac{C_N}{\lambda|k|^{N+1}}.
		\label{3.5}
	\end{equation}
	
	Let $u_1(0,k)=1$ and
	\[
	u_1'=(i\zeta+d_1+r_{11}+r_{12}\omega)u_1,
	\qquad u_2=\omega u_1.
	\]
	The real part of the coefficient on the right is at most $-\lambda/2$;
	thus the resulting solution is square integrable at $+\infty$.
	Write
	\[
	P T_{N+1} (x,ik)=
	\begin{pmatrix}a&b\\c&d\end{pmatrix}.
	\]
	Since $\boldsymbol y=P T_{N+1} \boldsymbol u$, $\boldsymbol y=(f,k^{-1}f')^{\mathsf T}$ and $T_{N+1} =I+O(|k|^{-2})$, it holds that
	\begin{equation}
		m_+^{\rm S}(x,-k^2)
		=k\frac{c+d\omega}
		{a+b\omega}.
		\label{3.6}
	\end{equation}
	Set $p_N =k c/a$.  Since
	$T_{N+1} =I+O(|k|^{-2})$, its Laurent expansion has the form
	\[
	p_N (x,k)=-k+\sum_{j=1}^{N}c_j(x)k^{-j}
	+O(|k|^{-N-1}).
	\]
	The successive off-diagonal cancellations in
	Lemma \ref{l5} are equivalent to comparison of
	powers in the scalar Riccati equation
	$(p_N )'+(p_N )^2=k^2+q$ up to the retained order.  Hence
	\[
	c_1=-\frac q2,
	\qquad
	c_{j+1}=\frac12\left(c_j'
	+\sum_{\ell=1}^{j-1}c_\ell c_{j-\ell}\right).
	\]
	Finally, \eqref{3.6},
	$\det(P T_{N+1} )=1$, and \eqref{3.5} give
	\[
	m_+^{\rm S}(0,-k^2)-p_N (0,k)
	=k\frac{\omega(0,k)}
	{a(0,k)(a(0,k)+b(0,k)\omega(0,k))}
	=O(|k|^{n-N}).
	\]
	Truncating the Laurent expansion after $j=N-n-1$ proves the asserted
	expansion for $m_+^{\rm S}$ and completes the proof of
	Theorem~\ref{ t1}. \bigskip
\end{proof}

\subsection{The self-adjoint Dirac operator}
We write both Dirac equations in the form
\begin{equation}
	\boldsymbol f'(x,z)
	=\bigl(-iz\sigma_3+B(x)\bigr)\boldsymbol f(x,z),
	\qquad
	\sigma_3=\begin{pmatrix}1&0\\0&-1\end{pmatrix}.
	\label{3.7}
\end{equation}
For $D_q^{\rm sa}$ and $D_q^{\rm ns}$, respectively,
\begin{equation*}
	B_{\rm sa}=\begin{pmatrix}0&iq\\-i\overline q&0\end{pmatrix},
	\qquad
	B_{\rm ns}=\begin{pmatrix}0&iq\\ i\overline q&0\end{pmatrix}.
\end{equation*}
With $A_{\rm sa}=-iz\sigma_3+B_{\rm sa}$ and
$A_{\rm ns}=-iz\sigma_3+B_{\rm ns}$, one has
\begin{align}
	A_{\rm sa}(x,z)\sigma_3
	&=-\sigma_3A_{\rm sa}(x,\overline z)^*,
	\notag\\
	A_{\rm ns}(x,z)&=-A_{\rm ns}(x,\overline z)^*.
	\label{3.8}
\end{align}
Apply Lemma \ref{l5} with
$M=N$, $\Lambda=-i\sigma_3$, and $A_0=B_{\rm sa}$ or $B_{\rm ns}$.
In either case $\boldsymbol f=T_N\boldsymbol u$ gives
\begin{equation}
	\boldsymbol u'
	=\bigl(-iz\sigma_3+D_N(x,z)+R_N(x,z)\bigr)\boldsymbol u,
	\label{3.9}
\end{equation}
where $D_N$ is diagonal, its diagonal entries are purely imaginary for
real $z$, and
\begin{equation}
	T_N=I+O(|z|^{-1}),
	\qquad
	\lVert R_N(\cdot,z)\rVert_\infty\leq C_N|z|^{-N}.
	\label{3.10}
\end{equation}
Because $B_{\rm sa}^{\rm d}=B_{\rm ns}^{\rm d}=0$, the expansion
of $D_N$ starts with the power $z^{-1}$.

Write
\begin{equation*}
	D_N=\begin{pmatrix}d_1&0\\0&d_2\end{pmatrix},
	\qquad
	R_N=\begin{pmatrix}r_{11}&r_{12}\\r_{21}&r_{22}\end{pmatrix}.
\end{equation*}
Then \eqref{3.9} becomes
\begin{align*}
	u_1'&=(-iz+d_1+r_{11})u_1+r_{12}u_2,
	\\
	u_2'&=r_{21}u_1+(iz+d_2+r_{22})u_2.
\end{align*}
The quotient $\omega=u_2/u_1$ satisfies
\begin{equation}
	\omega'=\beta\omega+r_{21}-r_{12}\omega^2,
	\qquad
	\beta=2iz+d_2-d_1+r_{22}-r_{11}.
	\label{3.11}
\end{equation}

\begin{proof}[Proof of Theorem~\ref{t3}]
	We prove the assertion for $m_+$ in $\Omega_{n,-}^{\rm D}$; the proof
	for $m_-$ in $\Omega_{n,+}^{\rm D}$ is obtained by integrating from
	$-\infty$ instead of $+\infty$.
	
	Let $z=\lambda-i\eta$, with $\eta>0$.  Since $d_1$ and $d_2$ are
	finite sums of negative powers of $z$ and are purely imaginary for real
	$z$, one has
	\begin{equation}
		|\operatorname{Re}d_j(x,z)|
		\leq C\eta|\lambda|^{-2}
		\label{3.12}
	\end{equation}
	when $|\lambda|$ is large compared with $\eta$.  If $\eta$ is comparable
	with $|z|$, then $d_j=O(|z|^{-1})$.  The lower bound in
	\eqref{1.2} implies
	\begin{equation}
		\eta^{-1}\leq C(1+|z|)^n,
		\qquad z\in\Omega_{n,-}^{\rm D}.
		\label{3.13}
	\end{equation}
	In particular, $|z|^{-N}/\eta=O(|z|^{n-N})=o(1)$.  Since
	$\operatorname{Re}(2iz)=2\eta$, the preceding estimates and
	\eqref{3.10} show that, for $|z|$ sufficiently
	large,
	\begin{equation*}
		\operatorname{Re}\beta(x,z)\geq\eta.
	\end{equation*}
	For the off-diagonal entries of
	\eqref{3.9}, put
	\[
	\varepsilon_z=\max\{\lVert r_{12}(\cdot,z)\rVert_\infty,
	\lVert r_{21}(\cdot,z)\rVert_\infty\}
	\leq C_N|z|^{-N}.
	\]
	Thus its smallness ratio is
	\[
	\rho_z=\frac{\varepsilon_z}{\eta}
	=O(|z|^{n-N})=o(1).
	\]
	Let $R_z=2\rho_z$ and let $B_{R_z}$ be the closed ball of radius $R_z$ in
	$L^\infty(\mathbb R_+)$.  Define $\mathcal T_z$ on $L^\infty(\mathbb R_+)$ by
	\begin{equation}
		(\mathcal T_z\omega)(x,z)
		=-\int_x^\infty
		G(x,t;z)\bigl(r_{21}(t,z)-r_{12}(t,z)\omega(t,z)^2\bigr)\,dt,
		\label{3.14}
	\end{equation}
	where
	\begin{equation*}
		G(x,t;z)
		=\exp\left(-\int_x^t\beta(s,z)\,ds\right),
		\qquad |G(x,t;z)|\leq e^{-\eta(t-x)}.
	\end{equation*}
	The ball $B_{R_z}$ is complete.  Since $\rho_z=o(1)$, for sufficiently
	large $|z|$ we have $\rho_z\leq1/4$, and for
	$\omega,\omega_1,\omega_2\in B_{R_z}$,
	\begin{align*}
		\lVert\mathcal T_z\omega\rVert_\infty
		&\leq\rho_z(1+R_z^2)\leq R_z,\\
		\lVert\mathcal T_z\omega_1-\mathcal T_z\omega_2\rVert_\infty
		&\leq2\rho_zR_z\lVert\omega_1-\omega_2\rVert_\infty
		\leq\frac14\lVert\omega_1-\omega_2\rVert_\infty.
	\end{align*}
	Consequently, Banach's fixed-point theorem gives a unique fixed point in
	$B_{R_z}$, which satisfies
	\begin{equation}
		\left\lVert\omega(\cdot,z)\right\rVert_\infty
		\leq\frac{2C_N}{\eta|z|^N}
		=O(|z|^{n-N}).
		\label{3.15}
	\end{equation}
	For this fixed point, differentiating the identity
	$\omega=\mathcal T_z\omega$, with $\mathcal T_z$ defined in
	\eqref{3.14}, gives
	\eqref{3.11}.  To justify analytic dependence, fix
	a compact parameter set $K$ in the high-energy region, put
	$\rho_K=\sup_{z\in K}\rho_z$ and use the common ball of radius
	$2\rho_K$.  The two estimates above hold there uniformly.  Starting the
	Picard iteration at zero, the analytic iterates therefore converge locally
	uniformly with values in $L^\infty$, and $\omega$ is analytic in $z$.
	
	To verify the Weyl condition, let $u_1$ solve
	\begin{equation*}
		u_1'=(-iz+d_1+r_{11}+r_{12}\omega)u_1,
		\qquad u_1(0,z)=1.
	\end{equation*}
	Then
	\begin{equation*}
		\boldsymbol u(x,z)
		=u_1(x,z)\begin{pmatrix}1\\\omega(x,z)\end{pmatrix}
	\end{equation*}
	solves \eqref{3.9}.  For $|z|$ sufficiently
	large,
	\begin{equation*}
		\operatorname{Re}(-iz+d_1+r_{11}+r_{12}\omega)
		\leq-\frac{\eta}{2}.
	\end{equation*}
	Thus $\boldsymbol u\in L^2(\mathbb{R}_+)^2$, and so is
	$\boldsymbol f=T_N\boldsymbol u$.
	
	Write
	\begin{equation*}
		T_N(0,z)=\begin{pmatrix}a(z)&b(z)\\c(z)&d(z)\end{pmatrix}.
	\end{equation*}
	Since $T_N=I+O(|z|^{-1})$, the first component of
	$T_N(0,z)(1,\omega(0,z))^{\mathsf T}$ does not vanish for large $|z|$,
	and
	\begin{equation}
		m_+(z)=\frac{c+d\omega}{a+b\omega}.
		\label{3.16}
	\end{equation}
	Set $p_N=c/a$.  The construction of $T_N$ gives
	\begin{equation*}
		p_N(z)=\sum_{j=1}^{N-n-1}c_j(0)z^{-j}
		+O(|z|^{n-N}).
	\end{equation*}
	Moreover,
	\begin{equation}
		m_+(z)-p_N(z)
		=\frac{\omega(ad-bc)}{a(a+b\omega)}
		=O(|z|^{n-N}).
		\label{3.17}
	\end{equation}
	For the left endpoint, let
	$z=\lambda+i\eta\in\Omega_{n,+}^{\rm D}$.  Then
	$\operatorname{Re}\beta\leq-\eta$.  Applying the same contraction
	estimates to the reversed Volterra operator
	\[
	(\mathcal T_z^-\omega)(x)=\int_{-\infty}^x
	\exp\left(\int_t^x\beta(s,z)\,ds\right)
	\bigl(r_{21}(t,z)-r_{12}(t,z)\omega(t,z)^2\bigr)\,dt,
	\]
	whose kernel is bounded by $e^{-\eta(x-t)}$, constructs the left Weyl
	solution.  The same calculation gives the
	asserted expansion for $m_-$ in \eqref{1.7}.  This also
	explains the pairing $m_+\leftrightarrow\Omega_{n,-}^{\rm D}$ and
	$m_-\leftrightarrow\Omega_{n,+}^{\rm D}$ in the theorem statement.
	
	It remains to identify the coefficients.  For self-adjoint Dirac operators, the
	Weyl function $m_{+}$ satisfies a scalar Riccati equation
	\[
	\partial_x m_{+}(x,z)=-i\overline q(x)+2izm_{+}(x,z)-iqm_{+}(x,z)^2.
	\]
	Comparison of the coefficients of $z^{-j}$ yields
	\[
	c_1=\frac{\overline q}{2},
	\qquad
	c_{j+1}=-\frac{i}{2}c_j'
	+\frac{q}{2}\sum_{\ell=1}^{j-1}c_\ell c_{j-\ell},
	\qquad j\geq1.
	\]
	This completes the proof of Theorem~\ref{t3}. \bigskip
\end{proof}

\subsection{The non-self-adjoint Dirac operator}
\begin{proof}[Proof of Theorem~\ref{t4}]
	For $B=B_{\rm ns}$, the coefficient matrix in
	\eqref{3.7} is skew-Hermitian when $z$ is real;
	see \eqref{3.8}.  Hence
	Lemma \ref{l5} gives a diagonal matrix
	$D_N(x,\lambda)$ with purely imaginary
	entries for real $\lambda$.  The gap estimate
	\eqref{3.13}, the contraction argument in the proof of
	Theorem~\ref{t3}, and the $L^2$ reconstruction are therefore
	unchanged.  They construct a right
	$L^2$ line in $\Omega_{n,-}^{\rm D}$ and a left $L^2$ line in
	$\Omega_{n,+}^{\rm D}$ for $c_{n}$ sufficiently large, and the
	locally uniform Picard iteration used there makes these lines analytic.
	
	For completeness, the fixed-point theorem by itself gives uniqueness only
	inside its small $L^\infty$ ball.  Uniqueness of the Weyl line follows
	separately from the zero-trace structure.  If $\boldsymbol f$ and
	$\boldsymbol g$ are two $L^2$ solutions at the same endpoint, then
	\[
	W(x)=\det\bigl(\boldsymbol f(x),\boldsymbol g(x)\bigr)
	\]
	is constant because the coefficient matrix in
	\eqref{3.7} has trace zero.  Along a sequence
	approaching that endpoint both vectors tend to zero, so $W=0$ and the two
	solutions are linearly dependent.  Thus the constructed line is the unique
	Weyl line.
	
	Finally, \eqref{3.16}--\eqref{3.17}
	give the expansions in \eqref{1.8}.  In this case the
	Riccati equation is
	\[
	\partial_xm=i\overline q+2izm-iqm^2.
	\]
	Comparison of powers of $z^{-1}$ gives
	\[
	c_1=-\frac{\overline q}{2},
	\qquad
	c_{j+1}=-\frac{i}{2}c_j'
	+\frac{q}{2}\sum_{\ell=1}^{j-1}c_\ell c_{j-\ell},
	\qquad j\geq1.
	\]
	This proves Theorem~\ref{t4}. \bigskip
\end{proof}

\section{An unbounded example and an open problem}
\label{sec:open-problem}

The boundedness assumption in Theorem~\ref{ t1} is a convenient
sufficient condition, but it is not expected to be necessary.  A useful
test case is the harmonic oscillator
\[
q(x)=cx^2,\qquad c>0,\qquad x\in\mathbb R_+.
\]
The corresponding Weyl function follows directly from the standard
parabolic-cylinder representation.  Indeed, set
\[
t=\sqrt{2}\,c^{1/4}x,
\qquad
\nu=\frac{z}{2\sqrt c}-\frac12,
\]
and write $\psi(x)=u(t)$.  Then $-\psi''+cx^2\psi=z\psi$ becomes
\[
u''(t)+\left(\nu+\frac12-\frac{t^2}{4}\right)u(t)=0.
\]
The solution square integrable at $+\infty$ is, up to a nonzero factor,
\[
\psi_+(x,z)=D_\nu\!\left(\sqrt{2}\,c^{1/4}x\right),
\]
where $D_\nu$ is the parabolic cylinder function; its standard large-$t$
asymptotic is $D_\nu(t)\sim t^\nu e^{-t^2/4}$ on the positive real axis
\cite{NIST}.  The values at the origin are
\[
D_\nu(0)=
\frac{2^{\nu/2}\sqrt\pi}{\Gamma((1-\nu)/2)},
\qquad
D_\nu'(0)=
-\frac{2^{(\nu+1)/2}\sqrt\pi}{\Gamma(-\nu/2)}.
\]
Consequently,
\[
m_+^{\rm S}(0,z)
=\sqrt{2}\,c^{1/4}\frac{D_\nu'(0)}{D_\nu(0)},
\]
and hence
\begin{align}
	m_+^{\rm S}(0,z)
	&=-2c^{1/4}
	\frac{\Gamma\bigl((3-c^{-1/2}z)/4\bigr)}
	{\Gamma\bigl((1-c^{-1/2}z)/4\bigr)} \notag\\
	&=2c^{1/4}
	\tan\!\left(\frac{\pi(c^{-1/2}z-1)}4\right)
	\frac{\Gamma\bigl((3+c^{-1/2}z)/4\bigr)}
	{\Gamma\bigl((1+c^{-1/2}z)/4\bigr)}.
	\label{4.1}
\end{align}
The second identity follows from the reflection formula for the Gamma
function.  If $w=(1+c^{-1/2}z)/4$, then, uniformly in every closed sector
$|\arg w|\leq\pi-\delta$,
\begin{equation}
	\frac{\Gamma(w+1/2)}{\Gamma(w)}
	\sim w^{1/2}\left(1+a_1w^{-1}+a_2w^{-2}+\cdots\right).
	\label{4.2}
\end{equation}
Thus the behavior near the positive real axis is governed by the tangent
factor in \eqref{4.1}.

To see the dependence on the rate of approach, fix $a>C_n$ and take
\[
k=a(1+|\eta|)^{-n}+i\eta,
\qquad z=-k^2.
\]
By \eqref{1.2}, this path lies in $\Omega_{n,+}^{\rm S}$ for all
sufficiently large $|\eta|$.  For definiteness, let $\eta\to+\infty$.
If $\theta(z)=\pi(c^{-1/2}z-1)/4$, then
\begin{equation}
	\operatorname{Im}\theta(-k^2)
	=-\frac{\pi a}{2\sqrt c}\,\eta(1+\eta)^{-n}.
	\label{4.3}
\end{equation}
For $n=0$, this imaginary part tends to $-\infty$, and
$\tan\theta(-k^2)=-i+O(e^{-C|k|})$ along the boundary-scale path.
In fact, the expansion is uniform throughout $\Omega_{0,+}^{\rm S}$.
Indeed, write $k=\lambda+i\eta$.  Since \eqref{1.2} gives
$\lambda\geq c_0>0$, split the region into $|\eta|\leq2\lambda$ and
$|\eta|>2\lambda$.  In the first part,
\[
W=\frac{1+c^{-1/2}k^2}{4}
\]
stays in a fixed closed sector avoiding the negative real axis, so the
first Gamma quotient in \eqref{4.1} has the usual
uniform ratio expansion.  In the second part,
\[
w=\frac{1-c^{-1/2}k^2}{4}
\]
lies in the right half-plane, while
$|\operatorname{Im}\theta(-k^2)|\geq C|\eta|$.  Hence
\[
\tan\theta(-k^2)
=-i\,\operatorname{sgn}\eta+O(e^{-C|\eta|}),
\]
and the second Gamma quotient can be treated by
\eqref{4.2}.  The two estimates give a uniform
complete inverse-power expansion on $\Omega_{0,+}^{\rm S}$, with
coefficients agreeing with the Riccati recursion in
Theorem~\ref{ t1}.  Thus the harmonic oscillator is an unbounded
potential for which the full algebraic expansion survives when $n=0$.

For $n=1$, the tangent factor remains bounded along the displayed path but
is generally oscillatory, so this is a borderline regime: boundedness alone
does not yield a coefficientwise expansion.  For $n\geq2$, the imaginary
part in \eqref{4.3} tends to zero, and the
tangent factor becomes arbitrarily large along sequences approaching its
poles.  Consequently, no uniform expansion of the form in
Theorem~\ref{ t1} can hold on $\Omega_{n,+}^{\rm S}$ in general.

This example suggests that the optimal hypotheses should involve a
quantitative balance between two features of the potential: its degree of
smoothness, which determines the length of the inverse-power expansion,
and its growth at infinity, which determines how closely the spectral
parameter may approach the spectrum while retaining uniform control.  A
natural open problem is to formulate such a balance and to characterize it
directly in terms of the Titchmarsh--Weyl function.  In the KdV setting,
this would complement Kotani's construction by allowing one to infer
spatial regularity or growth properties of the evolved potential from the
near-spectrum class of its spectral data, and may therefore help in the
study of global KdV behavior as $t\to\pm\infty$.

\end{document}